\begin{document}

\title*{Lerch Quotients, Lerch Primes,\\Fermat-Wilson Quotients, and the Wieferich-non-Wilson Primes 2, 3, 14771}\titlerunning{Lerch Quotients and Primes, Fermat-Wilson quotients, and the WW Primes 2, 3, 14771} 
\author{{\bf Jonathan Sondow}\\209 West 97th Street, New York, NY 10025\\\email{jsondow@alumni.princeton.edu}}
\authorrunning{Jonathan Sondow}

%

\maketitle

\abstract{The Fermat quotient \mbox{$q_p(a):=(a^{p-1}-1)/p$}, for prime $p\nmid a$, and the Wilson quotient \mbox{$w_p:=((p-1)!+1)/p$} are integers. If $p\mid w_p,$ then $p$ is a Wilson prime. For odd~$p,$ Lerch proved that $(\sum_{a=1}^{p-1} q_p(a) - w_p)/p$ is also an integer; we call it the \emph{Lerch quotient}~$\ell_p.$ If $p\mid\ell_p$ we say $p$ is a \emph{Lerch prime}. A simple Bernoulli-number test for Lerch primes is proven. There are four Lerch primes $3, 103, 839,2237$ up to $3\times10^6$; we relate them to the known Wilson primes $5,13,563.$ Generalizations are suggested. Next, if $p$ is a non-Wilson prime, then $q_p(w_p)$ is an integer that we call the \emph{Fermat-Wilson quotient} of~$p.$ The GCD of all $q_p(w_p)$ is shown to be~24. If $p\mid q_p(a),$ then $p$ is a Wieferich prime base~$a$; we give a survey of them. Taking $a=w_p,$ if $p\mid q_p(w_p)$ we say $p$ is a \emph{Wieferich-non-Wilson prime}. There are three up to $10^7$, namely, $2,3,14771.$ Several open problems are discussed.
}

\section{Introduction}
\label{sec:intro}

By Fermat's little theorem and Wilson's theorem, if $p$ is a prime and $a$ is an integer not divisible by $p,$ then the \emph{Fermat quotient of $p$ base $a,$}
\begin{align}
	q_p(a) &:= \frac{a^{p-1}-1}{p},  \label{EQ: qDef}
\end{align}
and the \emph{Wilson quotient of $p,$}
\begin{align}
 	w_p &:= \frac{(p-1)!+1}{p},  \label{EQ: wDef}
\end{align}
are integers. (See \cite[pp.~16 and~19]{rib89} and \cite[pp. 216--217]{rib00}.)

For example, the Fermat quotients of the prime $p=5$ base $a=1,2,3,4$ are $q_5(a)=0,3,16,51$; the Fermat quotients of $p=3,5,7,11,13,17,19,23,29,31,\dotso$ base \mbox{$a=2$} are$$q_p(2)= \frac{2^{p-1}-1}{p}=1, 3, 9, 93, 315, 3855, 13797, 182361, 9256395, 34636833,\dotso$$\cite[sequence A007663]{oeis}; and the Wilson quotients of $p=2,3,5,7,11,13,17,\dotso$ are$$w_p=1, 1, 5, 103, 329891, 36846277, 1230752346353, \dotso$$\cite[sequence A007619]{oeis}.

A prime $p$ is called a \emph{Wilson prime} \cite[section A2]{upint}, \cite[p.~277]{rib89} if $p$ divides $w_p,$ that is, if the supercongruence$$(p-1)! + 1\equiv 0 \pmod{p^2}$$holds. (A \emph{supercongruence} is a congruence whose modulus is a prime power.) 

For $p=2,3,5,7,11,13,$ we find that $w_p \equiv 1, 1, 0, 5, 1, 0 \pmod {p}$ (see \cite[sequence A002068]{oeis}), and so the first two Wilson primes are $5$~and~$13.$ The third and largest known one is~$563,$ uncovered by Goldberg~\cite{goldberg} in $1953.$ Crandall, Dilcher, and Pomerance~\cite{cdp} reported in $1997$ that there are no new Wilson primes up to $5\times 10^{8}$. The bound was raised to $2\times 10^{13}$ by Costa, Gerbicz, and Harvey \cite{cgh} in $2012$.

Vandiver in $1955$ famously said (as quoted by MacHale~\cite[p.~140]{machale}):
\begin{quote}
\noindent It is not known if there are infinitely many Wilson primes. This question seems to be of such a character that if I~should come to life any time after my death and some mathematician were to tell me that it had definitely been settled, I~think I~would immediately drop dead again.
\end{quote}

As analogs of Fermat quotients, Wilson quotients, and Wilson primes, we introduce Lerch quotients and Lerch primes in Section~\ref{sec:L}, and Fermat-Wilson quotients and Wieferich-non-Wilson primes in Section~\ref{sec:W}. We define them by combining Fermat and Wilson quotients in apparently new ways.


\section{Lerch quotients and Lerch primes}
\label{sec:L}

In $1905$ Lerch~\cite{lerch} proved a congruence relating the Fermat and Wilson quotients of an odd prime.\\

\noindent{\bf Lerch's Formula.} {\it If a prime $p$ is odd, then}
\begin{equation*}
	\sum_{a=1}^{p-1} q_p(a) \equiv w_p \pmod{p}, 
\end{equation*}
{\it that is,}
\begin{align}
	\sum_{a=1}^{p-1}a^{p-1} - p - (p-1)! \equiv 0 \pmod{p^2}. \label{lf2}
\end{align}
\begin{proof}
 Replace $a$ with $ab$ in equation \eqref{EQ: qDef}. Substituting $a^{p-1}=pq_p(a)+1$ and $b^{p-1}=pq_p(b)+1,$ we deduce Eisenstein's logarithmic relation~\cite{eisenstein}
\begin{align*}
	q_p(ab) \equiv q_p(a) + q_p(b) \pmod{p}
\end{align*}
and Lerch's formula follows. For details, see \cite{lerch} or \cite{sm}.
\hfill $\Box$
\end{proof}

Ribenboim~\cite[p.~218]{rib00} explains the point of Lerch's formula this way:
\begin{quote}
Since the Fermat quotient is somehow hard to compute, it is more natural to relate their sums, over all the residue classes, to quantities defined by~$p.$
\end{quote}

Wilson quotients and Lerch's formula have been used (see \cite{sm}) to characterize solutions of the congruence
$$1^n + 2^n + \dotsb + k^n \equiv (k+1)^n \pmod{k^2}.$$

\subsection{Lerch quotients}
\label{subsec:Lquotients}

Lerch's formula allows us to introduce the Lerch quotient of an odd prime, by analogy with the classical Fermat and Wilson quotients of any prime.

\begin{definition} \label{DEF: lq}
The \emph{Lerch quotient} of an odd prime $p$ is the integer
\begin{align*}
	\ell_p :=&\ \frac{\sum_{a=1}^{p-1} q_p(a) - w_p}{p} =\ \frac{\sum_{a=1}^{p-1}a^{p-1} - p - (p-1)!}{p^2}. 
\end{align*}
\end{definition}

For instance,
\begin{align*}
	\ell_5 =&\ \frac{0+3+16+51-5}{5} = \frac{1+16+81+256-5-24}{25} = 13.
\end{align*}

The Lerch quotients of $p=3,5,7,11,13,17,19,23,29,\dotso$ are
\begin{align*}
 \ell_p =\ & 0, 13, 1356, 123229034, 79417031713, 97237045496594199,\\
& 166710337513971577670, 993090310179794898808058068,\\
& 60995221345838813484944512721637147449,\dotso,
\end{align*}
and for prime $p\le62563$ the only  Lerch quotient $ \ell_p$ that is itself a prime number is $\ell_5=13$ (see \cite[Sequence A197630]{oeis}). By contrast, the Wilson quotients $w_p$ of the primes $p= 5, 7, 11, 29, 773, 1321, 2621$ are themselves prime \cite[Section A2]{upint}, \cite[Sequence A050299]{oeis}.


\subsection{Lerch Primes and Bernoulli Numbers} \label{subsec:Lprimes}

We define Lerch primes by analogy with Wilson primes.

\begin{definition} \label{DEF: lp}
An odd prime $p$ is a \emph{Lerch prime} if $p$ divides $\ell_p,$ that is, if
\begin{align}
	\sum_{a=1}^{p-1} a^{p-1} - p - (p-1)! \equiv 0 \pmod{p^3}. \label{EQ: lpDef}
\end{align}
\end{definition}

For $p=3, 5, 7, 11, 13, 17, 19, 23, 29, 31, 37, 41, 43, 47, 53, 59, 61, 67, 71, 73, 79, 83,$ $89, 97,101, 103,\dotsc,$ we find that
\begin{eqnarray*}
\ell_p \equiv  &&0, 3, 5, 5, 6, 12, 13, 3, 7, 19, 2, 21, 34, 33, 52, 31, 51, 38, 32, 25, 25, 25, \\
&&53, 22, 98, 0,\dotsc \pmod{p}
\end{eqnarray*}
\cite[Sequence A197631]{oeis}, and so the first two Lerch primes are~$3$ and $103.$

We give a test for Lerch primes involving \emph{Bernoulli numbers}. Ubiquitous in number theory, analysis, and topology (see Dilcher~\cite{dilcher}), they are rational numbers~$B_n$ defined implicitly for $n\ge1$ by the symbolic recurrence relation$$(B+1)^{n+1}-B^{n+1}=0.$$(Ribenboim~\cite[p. 218]{rib00} says, ``Treat $B$ as an indeterminate and, after computing the polynomial in the left-hand side, replace $B^k$ by $B_k.$'') Thus for $n=1,$ we have $(B+1)^2-B^2=2B_1+1=0,$ and so $B_1=-1/2.$ Now with $n=2,$ we see that $(B+1)^3-B^3=3B_2+3B_1+1=0$ leads to $B_2=1/6.$ In this way, we get
\begin{equation*}
    B_3=0, B_4=-\frac{1}{30}, B_5=0, B_6=\frac{1}{42}, B_7=0, B_8=-\frac{1}{30}, B_9=0, B_{10}=\frac{5}{66}, \dotso.
\end{equation*}

In $1937$ (before the era of high-speed computers!) Emma Lehmer~\cite{lehmer} showed that $5$ and $13$ are the only Wilson primes $p\le211.$ To do this, she used her husband D.~H.~Lehmer's table of Bernoulli numbers up to $B_{220},$ together with \emph{Glaisher's congruence}~\cite{glaisher} (see also~\cite{lerch}), which holds for any prime $p$:
\begin{align}
    w_p &\equiv B_{p-1}+\frac1p -1 \pmod {p}. \label{wilsoncong}
\end{align}
Here recall the definition$$\frac ab\equiv 0\pmod{m} \quad \iff \quad m\mid a,\ \ \text{GCD}(a,b)=1.$$

For example, that $5$~is a Wilson prime, but $7$ is not, follows from the congruences
\begin{align*}
    w_5 \equiv B_4+\frac15 -1&=-\frac56 \equiv 0 \pmod {5},\\
    w_7 \equiv B_6+\frac17 -1&=-\frac56 \not\equiv 0 \pmod {7}.
\end{align*}

Multiplying Glaisher's congruence by $p$ and substituting $pw_p=(p-1)!+1$ yields \emph{E.~Lehmer's test: A prime $p$ is a Wilson prime if and only~if}
\begin{equation*}
	pB_{p-1}\equiv p-1 \pmod {p^2}. 
\end{equation*}

We provide an analogous test for Lerch primes.

\begin{theorem}[Test For Lerch Primes] A prime $p>3$ is a Lerch prime if and only if
\begin{equation}
	pB_{p-1} \equiv p+(p-1)! \pmod{p^3}. \label{lpcriterion2}
\end{equation}
\end{theorem}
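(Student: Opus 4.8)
The plan is to evaluate the power sum $\sum_{a=1}^{p-1}a^{p-1}$ modulo $p^3$ in terms of Bernoulli numbers and then feed the result into the defining supercongruence \eqref{EQ: lpDef} for Lerch primes. The natural tool is Bernoulli's (Faulhaber's) formula for power sums, which with the convention $B_1=-1/2$ used above gives
\begin{equation*}
	\sum_{a=1}^{p-1}a^{p-1} = \frac1p\sum_{j=0}^{p-1}\binom{p}{j}B_j\,p^{\,p-j} = \sum_{j=0}^{p-1}\binom{p}{j}B_j\,p^{\,p-j-1}.
\end{equation*}
I would prove that modulo $p^3$ every term on the right vanishes except the top one $j=p-1$, which equals $\binom{p}{p-1}B_{p-1}p^{0}=pB_{p-1}$; that is, $\sum_{a=1}^{p-1}a^{p-1}\equiv pB_{p-1}\pmod{p^3}$. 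Granting this, \eqref{EQ: lpDef} becomes $pB_{p-1}-p-(p-1)!\equiv 0\pmod{p^3}$, which is exactly \eqref{lpcriterion2}, and the theorem follows.

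The heart of the argument is therefore a term-by-term $p$-adic valuation estimate. First, $\binom{p}{j}$ contributes exactly one factor of $p$ for each $1\le j\le p-1$, since $p!$ has a single factor of $p$ while $j!$ and $(p-j)!$ have none. Second, because $B_j=0$ for every odd $j\ge 3$, all odd-index terms except $j=1$ drop out immediately; in particular $j=p-2$ contributes nothing. Third, I would invoke the von Staudt--Clausen theorem: $p$ divides the denominator of $B_j$ only when $(p-1)\mid j$, which for $0<j<p-1$ never occurs, so each such $B_j$ is $p$-integral. Combining these, a term with even $2\le j\le p-3$ has $p$-adic valuation at least $1+0+(p-j-1)\ge 3$ and so vanishes modulo $p^3$, while the two boundary terms $j=0$ and $j=1$ equal $p^{p-1}$ and $-\tfrac12 p^{p-1}$, whose valuation is $p-1$.

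The point requiring care — and the place where the hypothesis $p>3$ enters — is precisely these boundary terms: for $p>3$ one has $p-1\ge 4>3$, so the $j=0,1$ terms are killed, whereas for $p=3$ the term $p^{p-1}=9$ is not divisible by $p^3=27$ and the clean collapse to $pB_{p-1}$ fails. I expect the main obstacle to be the bookkeeping that simultaneously tracks the single factor of $p$ from $\binom{p}{j}$, the possible $p$ in the denominator of $B_j$, and the explicit power $p^{\,p-j-1}$, so that exactly one term survives. Once the congruence $\sum_{a=1}^{p-1}a^{p-1}\equiv pB_{p-1}\pmod{p^3}$ is established, substitution into \eqref{EQ: lpDef} finishes the proof with no further computation.
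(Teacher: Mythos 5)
Your proposal is correct and takes essentially the same approach as the paper: both apply Faulhaber's formula with $k=p$ and $n=p-1$, discard the odd-index Bernoulli terms, and use the von Staudt--Clausen theorem together with $p\mid\binom{p}{j}$ to kill every term except the one giving $pB_{p-1}$, with the hypothesis $p>3$ needed exactly where you say it is. The only difference is bookkeeping: the paper keeps the factor of $p$, states an intermediate criterion $(B+p)^p\equiv p^2+p!\pmod{p^4}$, and proves $(B+p)^p\equiv p^2B_{p-1}\pmod{p^4}$ before dividing by $p$, whereas you divide by $p$ at the outset and establish $\sum_{a=1}^{p-1}a^{p-1}\equiv pB_{p-1}\pmod{p^3}$ directly.
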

\begin{proof} 
We first establish the following \textit{Criterion: an odd prime $p$ is a Lerch prime if and only if}
\begin{align}
	(B+p)^p \equiv p^2+p! \pmod{p^4}. \label{lpcriterion}
\end{align}
To see this, recall the classical application of Bernoulli numbers called \emph{Faulhaber's formula} (also known as \emph{Bernoulli's formula}\textemdash Knuth~\cite{knuth} has insights on this):
\begin{align}
	1^n+2^n+\dotsb+(k-1)^n &= \frac{(B+k)^{n+1}-B^{n+1}}{n+1}. \label{ff}
\end{align}
(See Conway and Guy~\cite[pp. 106--109]{cg} for a lucid proof.) Now set $k=p$ and $n=p-1$ in \eqref{ff}. It turns out that $B_p=0$ (indeed, $B_3=B_5=B_7=B_9=\dotsb=0;$ see~\cite[p.~109]{cg},~\cite[section~7.9]{hw}), and it follows that the congruences \eqref{EQ: lpDef} and \eqref{lpcriterion} are equivalent. This proves the Criterion.

To prove the Test, note that for any odd positive integer $p,$ the vanishing of $B_{2k+1}$ for $k\ge1$ implies
\begin{align}
	(B+p)^p  = p^p+p\cdot p^{p-1}B_1 + \sum_{k=1}^{(p-1)/2} \binom{p}{2k}p^{p-2k}B_{2k}. \label{expand(B+p)^p}
\end{align}
The von Staudt-Clausen theorem~\cite[p.~109]{cg},~\cite[section~7.9]{hw},~\cite[p.~340]{rib89} says in part that the denominator of $B_{2k}$ is the product of all primes~$q$ for which $(q-1)\mid 2k.$ (For instance, as $(2-1)\mid 2$ and $(3-1)\mid 2,$ the denominator of $B_2$ is $2\cdot3,$ agreeing with $B_2=1/6.$) Thus, if $p$ is an odd prime, then on the right-hand side of~\eqref{expand(B+p)^p} only $B_{p-1}$ has denominator divisible by $p.$ From this we see, for $p\ge5,$ that~$p^4$ divides the numerator of each term except $p^2 B_{p-1}.$ (For the $k=(p-3)/2$ term, this uses $p\mid \binom{p}{p-3}.$) Therefore, the congruence
\begin{align}
	(B+p)^p  \equiv p^2 B_{p-1} \pmod{p^4} \label{p^2 B_{p-1}}
\end{align}
holds for \emph{all} primes $p>3.$ Substituting \eqref{p^2 B_{p-1}} into Criterion~\eqref{lpcriterion} and dividing by $p,$ we arrive at Test~\eqref{lpcriterion2}.\hfill $\Box$
\end{proof}

As a bonus, \eqref{p^2 B_{p-1}} affords a proof of Glaisher's congruence.

\begin{corollary} \label{COR:glaisher}
The congruence~\eqref{wilsoncong} holds. Equivalently, if $p$ is \emph{any} prime, then
\begin{equation}
    pB_{p-1} \equiv p+(p-1)!  \pmod {p^2}. \label{wilsoncong2}
\end{equation}
\end{corollary}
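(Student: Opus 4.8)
The plan is to read Glaisher's congruence straight off the already-established supercongruence \eqref{p^2 B_{p-1}}, so that the corollary becomes mostly bookkeeping once that identity is in hand. First I would dispose of the asserted equivalence of \eqref{wilsoncong} and \eqref{wilsoncong2}. Since von Staudt--Clausen guarantees that $B_{p-1}+\tfrac{1}{p}-1$ is $p$-integral, congruence \eqref{wilsoncong} holds in the $p$-adic integers, and I may multiply it through by $p$; substituting $pw_p=(p-1)!+1$ from \eqref{EQ: wDef} converts $w_p\equiv B_{p-1}+\tfrac{1}{p}-1\pmod p$ into $(p-1)!+1\equiv pB_{p-1}+1-p\pmod{p^2}$, which rearranges to exactly \eqref{wilsoncong2}. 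Every step reverses, so the two congruences are equivalent and it suffices to prove \eqref{wilsoncong2}.

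Second, for the main range $p>3$ I would combine three ingredients. Putting $k=p$ and $n=p-1$ in Faulhaber's formula \eqref{ff} and using $B_p=0$ gives the exact rational identity $p\sum_{a=1}^{p-1}a^{p-1}=(B+p)^p$. The supercongruence \eqref{p^2 B_{p-1}} then yields $p\sum_{a=1}^{p-1}a^{p-1}\equiv p^2B_{p-1}\pmod{p^4}$, and dividing by $p$ (legitimate because von Staudt--Clausen forces $pB_{p-1}$ to be a $p$-adic integer) produces $\sum_{a=1}^{p-1}a^{p-1}\equiv pB_{p-1}\pmod{p^3}$, hence in particular modulo $p^2$. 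On the other hand, Lerch's formula \eqref{lf2} gives $\sum_{a=1}^{p-1}a^{p-1}\equiv p+(p-1)!\pmod{p^2}$. Equating the two expressions for $\sum_{a=1}^{p-1}a^{p-1}$ modulo $p^2$ delivers \eqref{wilsoncong2} for every prime $p>3$.

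Third, because \eqref{p^2 B_{p-1}} was proven only for $p>3$, I would finish by checking $p=2$ and $p=3$ directly: from $B_1=-1/2$ one gets $2B_1=-1\equiv 3=2+1!\pmod 4$, and from $B_2=1/6$ one gets $3B_2=\tfrac12\equiv 5=3+2!\pmod 9$ (since $2\cdot 5\equiv 1\pmod 9$). This closes the two excluded cases.

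The only genuine subtlety, and it is mild, is the modulus bookkeeping: \eqref{p^2 B_{p-1}} carries content modulo $p^4$, but after dividing by $p$ and meeting it against the modulo-$p^2$ content of Lerch's formula, the conclusion is only modulo $p^2$ — precisely the strength of Glaisher's congruence and no more. The accompanying point to keep straight is that $pB_{p-1}$ is truly $p$-integral, so that both the division by $p$ and the final congruence modulo $p^2$ are meaningful; this is exactly where the von Staudt--Clausen theorem, already invoked in the proof of the Test, does its work, and I expect it to be the main thing to state carefully rather than a real obstacle.
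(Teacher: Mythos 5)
Your proof is correct and follows essentially the same route as the paper's: both establish the equivalence by multiplying \eqref{wilsoncong} by $p$ and substituting \eqref{EQ: wDef}, check $p=2,3$ directly, and for $p>3$ combine Lerch's formula \eqref{lf2}, Faulhaber's formula \eqref{ff} with $B_p=0$, and the supercongruence \eqref{p^2 B_{p-1}}. The only cosmetic difference is that you divide \eqref{p^2 B_{p-1}} by $p$ before invoking Lerch, whereas the paper multiplies Lerch's congruence by $p$ and compares at the level of $(B+p)^p$; the content is identical.
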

\begin{proof} To see the equivalence, substitute~\eqref{EQ: wDef} into~\eqref{wilsoncong} and multiply by $p.$ To prove~\eqref{wilsoncong2}, first verify it for $p=2$ and $3.$ If $p>3,$ use~\eqref{lf2}, \eqref{ff}, and the fact that $B_p=0$ to get \mbox{$(B+p)^p \equiv p^2+p! \pmod{p^3}$}. Then \eqref{p^2 B_{p-1}} and division by $p$ yield~\eqref{wilsoncong2}.\hfill $\Box$
\end{proof}

Notice that the congruences \eqref{lpcriterion2} and \eqref{wilsoncong2} are the same, except that in \eqref{lpcriterion2} the modulus is~$p^3,$ while in \eqref{wilsoncong2} it is $p^2.$ However, one cannot prove Corollary~\ref{COR:glaisher} trivially (by reducing \eqref{lpcriterion2} modulo $p^2$ instead of~$p^3$), because \eqref{lpcriterion2} holds only for Lerch primes, whereas \eqref{wilsoncong2} holds for all primes.


\subsection{Computing Lerch primes: a surprising crossover} \label{subsec:Lcomp}

Let us compare two methods of computing Lerch primes: Definition~\eqref{EQ: lpDef} and Test~\eqref{lpcriterion2}. Both require, essentially, computation modulo~$p^3.$ The Test seems simpler, but on the other hand it requires computing $B_{p-1}$ modulo $p^2.$

To find out which is faster, we used the code
\begin{quote}
\verb|If[Mod[Sum[PowerMod[a,p-1,p^3], {a,1,p-1}] - p - (p-1)!, p^3] |\\
\verb|   == 0, Print[p]]|
\end{quote}
\noindent in a \emph{Mathematica} (version 7.0.0) program for \eqref{EQ: lpDef}, and we used the code
\begin{quote}
\verb|If[Mod[Numerator[p*Mod[BernoulliB[p-1],p^2] - p - (p-1)!], p^3]|\\
\verb|   == 0, Print[p]]|
\end{quote}
\noindent in a program for~\eqref{lpcriterion2}. Here \verb|Mod[a,m]| gives $a\ \text{mod}\ m,$ \verb|PowerMod[a,b,m]| gives $a^{b}\ \text{mod}\ m$ (and is faster than \verb|Mod[a^b,m]|), and \verb|BernoulliB[k]| gives $B_k.$

Table~1 shows the CPU time (on a MacBook Air computer with OS~X 10.6 and 2.13GHz Intel processor) for each program to decide whether $p$ is a Lerch prime.

Note the surprising crossover in the interval \mbox{$10007\le p\le20011$}: before it, Test~\eqref{lpcriterion2} is much faster than Definition~\eqref{EQ: lpDef}, but after the interval the reverse is true. Notice also that for $p>10^4$ the CPU times of~\eqref{EQ: lpDef} grow at about the same rate as~$p,$ while those of~\eqref{lpcriterion2} balloon at more than double that rate.

The programs for \eqref{EQ: lpDef} and \eqref{lpcriterion2} searched up to $10^4$ in about $47.3$ and $0.6$ seconds, respectively, and found the Lerch primes $3,103,839,$ and $2237$ (see \cite[Sequence A197632]{oeis}). There are no others up to $10^6$, by the program for \eqref{EQ: lpDef}, which consumed about $160$ hours. (To run the program for \eqref{lpcriterion2} that far up was not feasible.)

Marek Wolf, using a modification of \eqref{EQ: lpDef}, has computed that there are no Lerch primes in the intervals $1000003\le p\le4496113$ and $18816869\le p \le 18977773$, as well as $32452867 \le p\le32602373$. His computation took six months of CPU time on a 64-bit AMD Opteron 2700 MHz processor at the cluster \cite{klaster}.

\vspace{1em}
\begin{center}
\begin{tabular}{r| c c r@{.} c}
\hline
&\multicolumn{4}{c}{\, CPU time in seconds} \\
$p$\hspace{.4cm} & Definition & vs. & \multicolumn{2}{c}{\hspace{.4cm}Test}  \\
\hline
$5$ & 0.000052 &  $>$  &0&000040\\
$11$ & 0.000069 & $>$ & 0&000044\\
$101$ & 0.000275 & $>$ & 0&000064\\
$1009$ & 0.002636 & $>$ &0&000156\\
$10007$ & 0.088889 &$>$ & 0&002733\\
\hline
$20011$ & 0.183722 &$<$ & 0&337514\\
$30011$ & 0.294120 & $<$ &0&816416\\
$100003$ & 1.011050 & $<$ &10&477100\\
$200003$ & 2.117640 & $<$ &49&372000\\
$300007$ & 3.574630 & $<$ &121&383000\\
$1000003$ & 12.647500\ \  &  $<$ &1373&750000\\
\hline
\end{tabular}
\end{center}
\begin{center}
Table 1: Time each of two programs takes to compute whether $p$ is a Lerch prime.
\end{center}


\subsection{Generalizations} \label{subsec: eulergauss}

Euler and Gauss extended Fermat's little theorem and Wilson's theorem, respectively, to congruences with a composite modulus~$n$---see~\cite[Theorems~71 and~129]{hw}. The corresponding generalizations of Fermat and Wilson quotients and Wilson primes are called {\em Euler quotients}~$q_n(a),$ {\em generalized Wilson quotients}~$w_n,$ and {\em Wilson numbers}~$n\mid w_n$ (see~\cite[sequences A157249 and A157250]{oeis}). (The $w_n$ are not called ``Gauss quotients;'' that term appears in the theory of hypergeometric functions.)  In $1998$ Agoh, Dilcher, and Skula \cite[Proposition~2.1]{ads} (see also Dobson \cite{dobson} and Cosgrave and Dilcher \cite{cd}) extended Lerch's formula to a congruence between the $q_n(a)$ and $w_n.$

Armed with these facts, one can define {\em generalized Lerch quotients}~$\ell_n$ and {\em Lerch numbers}~$n\mid\ell_n.$ But that's another story for another time.

\subsection{Open Problems} \label{subsec:Lprob}

To conclude this section, we pose some open problems.

\begin{prob}
Is $\ell_5=13$ the only prime Lerch quotient?
\end{prob}

\begin{prob}
Is there a fifth Lerch prime? Are there infinitely many?
\end{prob}


Of the $78498$ primes $p<10^6,$ only four are Lerch primes. Thus the answer to the next question is clearly yes; the only thing lacking is a proof!

\begin{prob} \label{Q: nonlerch}
Do infinitely many \emph{non}-Lerch primes exist?
\end{prob}

As the known Lerch primes $3,103, 839,2237$ are distinct from the known Wilson primes $5,13,563,$ we may ask:

\begin{prob}
Is it possible for a number to be a Lerch prime and a Wilson prime simultaneously?
\end{prob}

Denoting the $n$th prime by $p_n,$ the known Wilson primes are $p_3,p_6,p_{103}.$ The primes among the indices $3,6,103,$ namely, $3$ and $103,$ are Lerch primes. This leads to the question:

\begin{prob} \label{Q: coincidence}
If $p_n$ is a Wilson prime and $n$ is prime, must $n$~be a Lerch prime?
\end{prob}

The answer to the converse question---if $n$ is a Lerch prime, must $p_n$ be a Wilson prime?---is no: $p_{839}$ and $p_{2237}$ lie strictly between $563$ and $5\times 10^{8},$ where according to \cite{cdp} there are no Wilson primes.

In connection with Problem \ref{Q: coincidence}, compare Davis's ``Are there coincidences in mathematics?''~\cite{davis} and Guy's ``The strong law of small numbers''~\cite{law}.


\section{Fermat-Wilson quotients and the WW primes 2, 3, 14771} \label{sec:W}

Suppose that a prime $p$~is not a Wilson prime, so that $p$~does not divide its Wilson quotient $w_{\mspace{1mu}p}.$ Then in the Fermat quotient $q_{\mspace{1mu}p}(a)$ of~$p$ base $a$, we may take $a=w_p.$

\begin{definition} \label{DEF: fw}
If $p$~is a non-Wilson prime, then the \emph{Fermat-Wilson quotient of~$p$} is the integer
\begin{equation*}
     q_{\mspace{1mu}p}(w_{\,p})=\frac{w_{\,p}^{\,p-1}-1}{p}.
\end{equation*}
\end{definition}
For short we write$$g_{\,p}:=q_{\,p}(w_{\,p}).$$

The first five non-Wilson primes are $2,3,7,11,17$. Since $w_{\,2}=w_{\,3}=1,$ $w_{\,7}=103,$ and $w_{\,11}=329891,$ the first four Fermat-Wilson quotients are $g_{\,2}=\,g_{\,3}=0,$
\begin{eqnarray*}
     g_{\,7}&=&\frac{103^{\,6}-1}{7} = 170578899504,
\end{eqnarray*}
and
\begin{eqnarray*}
     g_{\,11}&=&\frac{329891^{\,10}-1}{11}\\
               &=& 1387752405580695978098914368989316131852701063520729400
\end{eqnarray*}
\cite[Sequence A197633]{oeis}. The fifth one, $g_{\,17},$ is a $193$-digit number.


\subsection{The GCD of all Fermat-Wilson quotients} \label{subsec:GCD}

We saw that at least one Lerch quotient and seven Wilson quotients are prime numbers. What about Fermat-Wilson quotients?

\begin{theorem} \label{THM:gcd}
The greatest common divisor of all Fermat-Wilson quotients is~$24$. In particular, $q_{\,p}(w_{\,p})$ is never prime.
\end{theorem}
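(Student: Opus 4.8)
The claim is that the GCD of all Fermat-Wilson quotients $g_p = q_p(w_p) = (w_p^{p-1}-1)/p$ (over all non-Wilson primes $p$) equals 24.

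To prove GCD = 24, I need:
1. An upper bound: show GCD divides 24 (using specific values)
2. A lower bound: show 24 divides every $g_p$

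**Planning the approach:**

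For the upper bound: $g_2 = g_3 = 0$, $g_7 = 170578899504$, $g_{11} = \ldots$. Since $g_2 = g_3 = 0$, they contribute nothing to GCD (gcd with 0 leaves the other number). So GCD = GCD of all the nonzero ones. Computing GCD$(g_7, g_{11}) = 24$ would give upper bound 24, provided I verify $24 | g_7$ and $24 | g_{11}$ and that their GCD is exactly 24.

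Actually $g_7 = 170578899504 = 24 \times 7107454146$. Let me check: $7107454146$... I'd compute GCD$(g_7, g_{11})$ to show it's 24.

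For the lower bound (the main work): show $24 | g_p$ for all non-Wilson primes $p$, i.e., $w_p^{p-1} \equiv 1 \pmod{24p}$... no wait, $g_p = (w_p^{p-1}-1)/p$, so I need $24 | (w_p^{p-1}-1)/p$, i.e., $24p | w_p^{p-1} - 1$.

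Since $24 = 8 \cdot 3$, show $8 | g_p$ and $3 | g_p$ separately.

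Key: I need properties of $w_p \bmod$ small numbers. For $p \geq 5$: $w_p = ((p-1)!+1)/p$. For small primes handle directly ($g_2=g_3=0$, trivially divisible; $g_5$ undefined since 5 is Wilson; $g_7, g_{11}$ computed).

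For $p \geq 7$ (or wherever needed), I'd analyze $w_p \bmod 48$ or similar, and use that $p-1$ is even to get square structure.

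<br>

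The plan is to establish the equality of greatest common divisors by proving two divisibilities: that $24$ divides every Fermat-Wilson quotient $g_{\,p}$ (the lower bound), and that the GCD of the particular quotients is no larger than $24$ (the upper bound). For the upper bound, I would note that $g_{\,2}=g_{\,3}=0$ contribute nothing, since $\gcd(0,n)=n$, so the GCD of all $g_{\,p}$ equals the GCD of the nonzero ones. It then suffices to compute $\gcd(g_{\,7},g_{\,11})$ and check that it equals exactly $24$; given the explicit values $g_{\,7}=170578899504$ and the displayed value of $g_{\,11}$, this is a finite calculation, and combined with the lower bound it pins the overall GCD at $24$.

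The substantive part is the lower bound: $24\mid g_{\,p}$ for every non-Wilson prime $p$, which (since $g_{\,p}=(w_{\,p}^{\,p-1}-1)/p$) amounts to the supercongruence $w_{\,p}^{\,p-1}\equiv 1\pmod{24p}$. Because $24=8\cdot 3$ with $8$ and $3$ coprime, I would prove $8\mid g_{\,p}$ and $3\mid g_{\,p}$ separately. The cases $p=2,3$ are immediate since $g_{\,2}=g_{\,3}=0$, so I may assume $p\ge5$; and $p=5$ is excluded as a Wilson prime, so in fact $p\ge7$. The engine in both cases is to determine the residue of the Wilson quotient $w_{\,p}$ modulo small numbers, and then exploit that the exponent $p-1$ is even (so $w_{\,p}^{\,p-1}$ is a perfect power, indeed a square, of $w_{\,p}^{(p-1)/2}$).

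The key step I expect to require the most care is computing $w_{\,p}\bmod 8$ and $w_{\,p}\bmod 3$ from the definition $w_{\,p}=((p-1)!+1)/p$. For $p\ge7$, the factorial $(p-1)!$ already contains the factors $2,3,4,\dots$, so $(p-1)!$ is divisible by high powers of $2$ and by $3$; the delicate point is reading off $w_{\,p}=((p-1)!+1)/p$ modulo $8$ and modulo $3$ after dividing by $p$, which forces me to track $(p-1)!+1$ modulo $8p$ and $3p$ and to invert $p$ in those moduli. Once I know, say, that $w_{\,p}$ is odd and lands in a restricted residue class mod $8$, raising to the even power $p-1$ and reducing should yield $w_{\,p}^{\,p-1}\equiv1\pmod 8$; the analogous computation mod $3$ gives $w_{\,p}^{\,p-1}\equiv1\pmod 3$. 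I would then assemble these via the Chinese Remainder Theorem together with the defining congruence $w_{\,p}^{\,p-1}\equiv1\pmod p$ (which holds because $p\nmid w_{\,p}$, by Fermat's little theorem) to conclude $w_{\,p}^{\,p-1}\equiv1\pmod{24p}$, i.e. $24\mid g_{\,p}$. The final clause, that $g_{\,p}$ is never prime, follows immediately since a prime divisible by $24$ cannot exist, so every nonzero $g_{\,p}$ is composite (and the zero values are not prime either).
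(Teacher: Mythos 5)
Your proposal is correct and follows essentially the same route as the paper: the upper bound comes from the explicit values of $g_{\,7}$ and $g_{\,11}$ (with $g_{\,2}=g_{\,3}=0$ contributing nothing to the GCD), and the lower bound from the fact that $w_{\,p}$ is coprime to $6$ for $p\ge 5$ (because $p\,w_{\,p}=(p-1)!+1$ is not divisible by $2$ or $3$) together with the evenness of the exponent $p-1$, giving $w_{\,p}^{\,p-1}\equiv 1\pmod{24}$ and hence $24\mid g_{\,p}$. The only difference is cosmetic: the paper never needs the exact residue of $w_{\,p}$ modulo $8$ or $3$, nor a CRT step with modulus $p$ --- coprimality to $6$ alone suffices --- so the ``delicate point'' you anticipate (tracking $(p-1)!+1$ modulo $8p$ and $3p$ and inverting $p$) can simply be skipped.
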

\begin{proof}
The prime factorizations of $q_{\,p}(w_{\,p})=g_{\,p}$ for $p=7$ and $11$ are
\begin{eqnarray*}
     g_{\,7} = 2^{\,4}\cdot3^{\,2}\cdot13\cdot17\cdot19\cdot79\cdot3571
\end{eqnarray*}
and
\begin{eqnarray*}
     g_{\,11}&=& 2^{\,3}\cdot3\cdot5^{\,2}\cdot37\cdot61\cdot71\cdot271\cdot743\cdot2999\cdot89671\cdot44876831\\
               &\quad&\cdot\ 743417279981\cdot7989680529881.
\end{eqnarray*}
Since $g_{\,2}=\,g_{\,3}=0,$ we thus have$$\text{GCD}(g_{\,2},g_{\,3},g_{\,7},g_{\,11})=2^{\,3}\cdot3=24.$$

To complete the proof, we show that $24$ divides $g_{\,p}$ whenever $p>3.$ Since$$p\,w_{\,p}=(p-1)!+1,$$it is clear that if $p\ge5,$ then $p\,w_{\,p},$ and hence $w_{\,p},$ is not divisible by $2$ or $3.$ As even powers of such numbers are $\equiv 1 \!\pmod{8}$ and $\equiv 1\! \pmod{3},$ and so $\equiv 1\! \pmod{24},$ it follows that $p\, g_{\,p} \ (= w_{\,p}^{\,p-1}-1),$ and hence $g_{\,p},$ is divisible by $24.$
\hfill $\Box$
\end{proof}

\subsection{Wieferich primes base a}  \label{subsec:Wief}

Given an integer $a,$ a prime $p$ is called a \emph{Wieferich prime base $a$} if the supercongruence
\begin{equation} 
    a^{\,p-1} \equiv 1 \pmod{p^{\,2}} \label{wief}
\end{equation}
holds. For instance, $11$  is a Wieferich prime base~$3,$ because$$3^{10}-1= 59048= 11^2\cdot 488.$$

Paraphrasing Ribenboim \cite[p. 264]{rib89}, it should be noted that, contrary to the congruence $a^{\,p-1} \equiv 1 \pmod{p}$ which is satisfied by every prime $p$ not dividing $a,$ the Wieferich supercongruence \eqref{wief} is very rarely satisfied.

When it is, $p$~cannot divide~$a,$ and so the Fermat quotient $q_{\,p}(a)$ is an integer. In fact, \eqref{EQ: qDef} shows that a prime $p$~is a Wieferich prime base~$a$ if and only if $p$~does not divide~$a$ but does divide $q_{\,p}(a)$.

In $1909,$ while still a graduate student at the University of M\"{u}nster in Germany, Wieferich created a sensation with a result related to Fermat's Last Theorem: \emph{If $x^{\,p} + y^{\,p} = z^{\,p},$ where $p$ is an odd prime not dividing any of the integers $x, y,$ or $z,$ then $p$~is a Wieferich prime base~2}. One year later, Mirimanoff proved that \emph{$p$~is also a Wieferich prime base~3}. (See \cite[pp. 110-111]{dickson}, \cite[Chapter 8]{rib00}, and \cite[p. 163]{wells}.)

The only known Wieferich primes base $2$ (also simply called \emph{Wieferich primes}) are $1093$ and $3511,$ discovered by Meissner in 1913 and Beeger in 1922, respectively. In $2011$ Dorais and Klyve \cite{dk} computed that there are no others up to $6.7\times10^{15}$. It is unknown whether infinitely many exist. (Neither is it known whether there are infinitely many \emph{non}-Wieferich primes base~$2$. However, Silverman has proved it assuming the $abc$-conjecture---see his pleasantly-written paper \cite{silverman}.) Likewise, only two Wieferich primes base $3$ (also known as \emph{Mirimanoff primes}) have been found, namely, $11$ and $1006003$. The second one was uncovered by Kloss in 1965. An unanswered question is whether it is possible for a number to be a Wieferich prime base~$2$ and base~$3$ simultaneously. (See \cite[section A3]{upint} and \cite[pp. 263--276, 333--334]{rib89}.)

For tables of all Wieferich primes $p$ base $a$ with $2<p<2^{\,32}$ and $2\le a\le99,$ see Montgomery \cite{montgomery}.

\subsection{The Wieferich-non-Wilson primes 2, 3, 14771}  \label{subsec:WW}

Let us consider Wieferich primes $p$ base $a$ where $a$ is the Wilson quotient of $p.$

\begin{definition} \label{DEF: ww}
Let $p$ be a non-Wilson prime, so that its Fermat-Wilson quotient $q_{\,p}(w_{\,p})$ is an integer. If $p$ divides $q_{\,p}(w_{\,p})$---equivalently, if the supercongruence
\begin{equation}
    w_{\,p}^{\,p-1} \equiv 1 \pmod{p^{\,2}} \label{wcong}
\end{equation}
holds---then $p$ is a Wieferich prime base $w_p,$ by definition \eqref{wief}. In that case, we call~$p$ a \emph{Wieferich-non-Wilson prime}, or \emph{WW prime} for short.
\end{definition}

For the non-Wilson primes $p=2, 3, 7, 11, 17, 19, 23, 29, 31, 37, 41, 43, 47, 53, 59,$ $61,67, 71, 73, 79, 83,\dotsc,$ the Fermat-Wilson quotients $q_{\,p}(w_{\,p})=g_{\,p}$ are congruent modulo~$p$ to
\begin{eqnarray*}
g_p \equiv  0, 0, 6, 7, 9, 7, 1, 6, 18, 17, 30, 11, 25, 30, 24, 46, 64, 16, 18, 4, 29,\dotsc \pmod{p}
\end{eqnarray*}
\cite[Sequence A197634]{oeis}. In particular, $2$ and $3$ are WW primes. But they are trivially so, because $g_{\,2}$ and $g_{\,3}$ are \emph{equal} to zero.

Is there a ``non-trivial'' WW prime? Perhaps surprisingly, the answer is yes but the smallest one is $14771$. In the next subsection, we give some details on using a computer to show that $14771$ is a WW prime. It is ``non-trivial'' because $ g_{\,14771}\neq0.$ In fact, taking logarithms, one finds that
\begin{align*}
     g_{\,14771}=\frac{\left(\frac{14770!+1}{14771}\right)^{\,14770} -1}{14771} >10^{\,8\times10^{\,8}},
\end{align*}
so that the number $g_{\,14771}$ has more than $800$ million decimal digits.


\subsection{Computer search} \label{subsec:search}

To search for WW primes, one can use a computer to calculate whether or not a given prime~$p$ satisfies condition \eqref{wcong}. Explicitly, if the number
\begin{equation}
    \left(\frac{(p-1)!+1}{p}\right)^{p-1} \text{ mod } p^{\,2} \label{explw(p)}
\end{equation}
is equal to $1$, then $p$ is a WW prime.

{\it Mathematica}'s function Mod$[a,m]$ can compute \eqref{explw(p)} when $p$ is small. But if $p$ is large, an ``Overflow'' message results. However, it is easy to see that in \eqref{explw(p)} one may replace $(p-1)!$ with $(p-1)!$ mod $p^{\,3},$ a much smaller number.

For example, it takes just a few seconds for a program using the code
\begin{quote}
\verb|If[PowerMod[(Mod[(p-1)!, p^3] + 1)/p, p-1, p^2] == 1, Print[p]]|
\end{quote}
to test the first $2000$ primes and print the WW primes $2,3,14771$ (see \cite[Sequence A197635]{oeis}).

Michael Mossinghoff, employing the GMP library~\cite{gmp}, has computed that there are no other WW primes up to $10^7$.


\subsection{More open problems} \label{subsec:moreprobs}

We conclude with three more open problems.
\begin{prob}
Can one prove that $14771$ is a WW prime (i.e., that $14771$ divides $ g_{\,14771}$) without using a computer?
\end{prob}

Such a proof would be analogous to those given by Landau and Beeger that $1093$ and $3511,$ respectively, are Wieferich primes base~$2.$ (See Theorem~91 and the notes on Chapter VI in \cite{hw}, and ``History and search status'' in \cite{wiki}.) However, proofs for Wieferich primes are comparatively easy, because (high) powers are easy to calculate in modular arithmetic, whereas factorials are unlikely to be calculable in logarithmic time.
\begin{prob}
Is there a fourth WW prime? Are there infinitely many?
\end{prob}

Comments similar to those preceding Problem \ref{Q: nonlerch} also apply to the next question.

\begin{prob} \label{Prob:nonWW}
Do infinitely many \emph{non}-WW primes exist?
\end{prob}

Is it possible to solve Problem~\ref{Q: nonlerch} or Problem~\ref{Prob:nonWW} assuming the $abc$-conjecture? (See the remark in Section~\ref{subsec:Wief} about Silverman's proof.)


\section*{Acknowledgments}
I am grateful to Wadim Zudilin for suggestions on the Test, for a simplification in computing WW primes, and for verifying that there are no new ones up to $30000$, using PARI/GP \cite{pari}. I thank Marek Wolf for computing Lerch primes, and Michael Mossinghoff for computing WW primes.


\end{document}